\newtheorem{theorem}{Theorem}
\newtheorem{Lemma}{Lemma}
\title{Threshold dynamics in a within-host infection model with Crowley–Martin functional response considering periodic effects}
\author[1,2]{Ibrahim Nali}
\author[2]{Attila Dénes}
\affil[1]{Bolyai Institute, University of Szeged, Szeged 6720, Hungary}
\affil[2]{National Laboratory for Health Security, Bolyai Institute, University of Szeged, Szeged 6720, Hungary}
\affil[ ]{\textit{Correspondence:} \href{mailto:ibrahim.nali@usmba.ac.ma}{ibrahim.nali@usmba.ac.ma},}
\affil[ ]{\textsuperscript{*} These authors contributed equally to this work.}
\date{}
\begin{document}

\maketitle
\abstract
{We present a mathematical model for within-host viral infections that incorporates the Crowley–Martin functional response, focusing on the dynamics influenced by periodic effects. This study establishes key properties of the model, including the existence, uniqueness, positivity, and boundedness of periodic orbits within the non-autonomous system. We demonstrate that the global dynamics are governed by the basic reproduction number, denoted as $\mathcal{R}_0$, which is calculated using the spectral radius of an integral operator. Our findings reveal that $\mathcal{R}_0$ serves as a threshold parameter: when $\mathcal{R}_0 < 1$, the virus-free periodic solution is globally asymptotically stable, indicating that the infection will die out. Conversely, if $\mathcal{R}_0 > 1$, at least one positive periodic solution exists, and the disease persists uniformly, with trajectories converging to a limit cycle. Additionally, we provide numerical simulations that support and illustrate our theoretical results, enhancing the understanding of threshold dynamics in within-host infection models.}

\textbf{Keywords:} within-host virus model; periodic effects; stability analysis; Crowley--Martin functional response
\section{Introduction}
Recently, there has been a growing emphasis on studying epidemic models from a within-host perspective to gain deeper insights into viral infections. This area of research has gained traction not only in time-invariant models but also in those incorporating periodic effects \cite{Zhangwang, Badersaad}. The intricate dynamics of viral infections within host organisms present a compelling area of study, particularly when examining the interplay between biological and environmental factors that influence infection outcomes. These factors, including periodic treatments and seasonal variations, significantly impact viral replication and host interactions, shaping the progression and control of infections. For instance, circadian rhythms regulate physiological processes such as immune responses, while seasonal variations in contact rates or vaccination programs also influence disease transmission \cite{Hajji,Koelle, Mazzoccoli,Seittos,Steindrof}.

The basic reproduction number, denoted as $\mathcal{R}_0$, is a critical parameter in disease transmission studies, representing the expected number of secondary cases generated by a typical infectious individual in a completely susceptible population. In classical epidemic models, $\mathcal{R}_0 < 1$ implies disease extinction, whereas $\mathcal{R}_0 > 1$ indicates potential persistence. While traditional approaches have effectively calculated $\mathcal{R}_0$ for various infectious diseases \cite{Diekmann}, its computation in periodic epidemic models remains a challenge due to fluctuations in disease transmission driven by seasonal or treatment-related factors \cite{Bacer}. Accurate definitions and computations of $\mathcal{R}_0$ in periodic contexts are essential, as time-averaged autonomous systems may not accurately reflect infection risks \cite{Farrington}. Recent studies have established general definitions of $\mathcal{R}_0$ in periodic environments, highlighting its role as a threshold parameter for local stability of disease-free periodic solutions and understanding global dynamics under specific conditions.

The Crowley--Martin functional response (CMFR) offers a robust framework for modeling infection dynamics in viral systems by addressing key limitations of bilinear incidence functions. These functions often assume constant interaction rates between host cells and pathogens, an assumption that breaks down in biological scenarios involving large viral populations due to competition for resources, leading to reduced interaction efficiency \cite{Yuyang, Li}. The CMFR, originally developed in the context of predator-prey dynamics, accounts for competition effects and dynamic interaction rates, making it particularly suited for in-host models of viral infections \cite{Paul, Crowley}. Defined as 
$$
\frac{\beta T V}{(1 + aT)(1 + bV)},
$$ 
where $T$ represents the density of susceptible cells, $V$ denotes the population of free virus particles, $\beta$ is the maximum infection rate, and $a, b > 0$ capture the effects of capture rate and interference, respectively, the CMFR effectively captures saturation phenomena in viral replication driven by limited target cell availability \cite{Cosner}. Analogous to how higher predator densities reduce feeding rates in predator-prey systems due to spatial factors and competition \cite{DeAngelis, Debnath}, viral dynamics exhibit similar interference effects, where increasing viral loads diminish infection efficiency. This refined perspective provides critical insights into the interplay between viruses and immune responses, offering a more biologically realistic depiction of in-host viral dynamics.

Building on our previous work, which employed an autonomous ordinary differential equation model to study the dynamics of Usutu virus \cite{Nali}, we now extend our analysis by incorporating periodic factors to account for seasonality within host-virus interactions. While environmental factors like climatic changes and vector dynamics are known to influence transmission \cite{Martinez, Hridoy, Hou}, it is equally critical to address seasonality within the host's physiological and treatment dynamics \cite{MA,Guerrero,Yang,Bacaer}. For instance, seasonal variations in immune responses and the timing of regular treatment doses can significantly impact viral replication and clearance rates. The Usutu virus demonstrates heightened virulence in specific seasons, with increased transmission during the dry season \cite{Patterson}. Furthermore, periodic medical interventions, such as treatments or vaccinations, can modulate infection outcomes, emphasizing the necessity of integrating seasonality into in-host models. Motivated by these insights, we generalize our previous autonomous model by incorporating the Crowley–Martin functional response within a periodic framework, enabling a more nuanced understanding of the interplay between seasonal host factors and viral dynamics.

This paper aims to develop a mathematical framework that integrates periodic effects into a within-host viral interaction model. Specifically, we explore the global dynamics with respect to the basic reproduction number $\mathcal{R}_0$, calculated as the spectral radius of an integral operator. We rigorously establish the existence, uniqueness, positivity, and boundedness of periodic solutions to ensure model robustness and applicability. The global stability of the disease-free periodic solution is proven for $\mathcal{R}_0 < 1$, while the persistence of infection is demonstrated for $\mathcal{R}_0 > 1$, indicating potential long-term viral presence. To complement these theoretical results, we provide insightful numerical simulations that visualize the complex dynamics of the system, offering valuable tools for understanding and applying our model in real-world contexts.
\section{Model derivation}
Building on the foundational work of Heitzman-Breen et al.~\cite{heitzman}, this model incorporates a Crowley--Martin type functional response to capture nonlinearities in the infection process. The framework divides the total cell population into three compartments:

\begin{itemize}
    \item \textbf{Healthy target cells ($T$):} Cells susceptible to infection.
    \item \textbf{Exposed cells ($E$):} Cells that have been infected but are not yet infectious.
    \item \textbf{Infected cells ($I$):} Cells actively producing virus particles.
\end{itemize}

Additionally, the dynamics of free virus particles ($V$) are explicitly modeled. To account for key processes within the host, time-dependent parameters are introduced to represent periodic influences arising from environmental, physiological, or therapeutic factors. 

The cell birth rate, $\mu(t)$, and death rate, $d(t)$, are modeled as bounded, continuous, and $\omega$-periodic functions. This periodicity reflects cyclic behaviors, such as circadian rhythms or seasonal variations, that influence cellular turnover. Similarly, the transmission rate $\beta(t)$ is also $\omega$-periodic, representing variations in the efficiency of infection driven by time-varying conditions like immune response fluctuations or external environmental changes.

The infection process is described by a Crowley--Martin functional response:
\[
\frac{\beta(t) T V}{(1 + C_1 T)(1 + C_2 V)},
\]
which accounts for saturation effects due to limited target cells and competitive interactions among virus particles. 

The dynamics of the system are governed by the following differential equations:
\begin{equation}
\label{1}
\begin{aligned}
    \frac{d T}{d t} &= \mu(t) - \frac{\beta(t) T V}{(1 + C_1 T)(1 + C_2 V)} - d(t) T, \\
    \frac{d E}{d t} &= \frac{\beta(t) T V}{(1 + C_1 T)(1 + C_2 V)} - k E - d(t) E, \\
    \frac{d I}{d t} &= k E - \delta I - d(t) I, \\
    \frac{d V}{d t} &= p I - c V.
\end{aligned}
\end{equation}

The parameters used in the model and their roles are summarized in Table~\ref{tab:parameters}.

\begin{table}[h!]
\centering
\begin{tabularx}{\textwidth}{ll}
\toprule
\textbf{Parameter} & \textbf{Definition} \\
\midrule
$\mu(t)$ & $\omega$-periodic cell birth rate  \\
$d(t)$ & $\omega$-periodic cell death rate  \\
$\beta(t)$ & $\omega$-periodic infection rate  \\
$k$ & Transition rate from exposed to infectious  \\
$\delta$ & Disease-induced death rate of infected cells  \\
$p$ & Production rate of virus particles \\
$c$ & Clearance rate of free virus particles  \\
$C_1$ & Saturation effect of target cells on infection  \\
$C_2$ & Saturation effect of virus particles on infection \\
\bottomrule
\end{tabularx}
\caption{Model parameters.}
\label{tab:parameters}
\end{table}

This time-dependent formulation extends the original time-constant model \cite{Nali} by incorporating periodic effects, providing a versatile framework for analyzing infection dynamics under realistic conditions. The inclusion of periodic parameters enables the model to capture oscillatory behaviors that arise from external and internal factors, such as seasonal influences or therapeutic interventions. 
\section{Basic properties}
In this section, we will study some basic properties of model \eqref{1}. First, in the following lemma, we will show boundedness of solutions. 
\begin{Lemma}
\label{Latr}
Assume the initial conditions for the model \eqref{1} as $T(0)>0, E(0)>0, I(0)>0$, and $V(0)>0$ the solutions $T(t),E(t),I(t)$, and $ V(t) $ remain positive for all $t>0$ and are uniformly bounded.
\end{Lemma}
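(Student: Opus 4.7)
The plan is to establish positivity and uniform boundedness separately, using standard differential inequalities adapted to the non-autonomous setting. Since $\mu$, $d$, and $\beta$ are continuous $\omega$-periodic functions, they are bounded on $\mathbb{R}$; let $\mu^u$, $d^l$, $d^u$, $\beta^u$ denote the relevant suprema and infima over one period.

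For positivity, I would rewrite each equation of \eqref{1} in the form $\dot x = (\text{non-negative})-g(t,\cdot)\,x$ and then invoke Gr\"onwall's inequality on any finite interval. Concretely, $\dot T \geq -\bigl(\tfrac{\beta(t)V}{(1+C_1T)(1+C_2V)}+d(t)\bigr)T \geq -(\beta^u V+d^u)\,T$, so $T(t)\geq T(0)\exp\!\bigl(-\int_0^t(\beta^u V+d^u)\,ds\bigr)>0$ as long as $V$ stays bounded on the interval. For the other compartments, $\dot E\geq -(k+d^u)E$, $\dot I\geq -(\delta+d^u)I$, and $\dot V\geq -cV$ immediately give $E(t),I(t),V(t)>0$ for all $t$ in the interval of existence, provided the initial data are positive.

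For uniform boundedness, I would first bound $T$ directly from $\dot T\leq \mu(t)-d(t)T\leq \mu^u-d^l T$, so the comparison principle yields $\limsup_{t\to\infty}T(t)\leq \mu^u/d^l$. Next, introducing $N(t):=T(t)+E(t)+I(t)$, the equations combine into $\dot N=\mu(t)-d(t)(T+E+I)-\delta I\leq \mu^u-d^l N$, which shows that $N$ is uniformly bounded, hence so are $E$ and $I$ separately by positivity. Finally, feeding the bound on $I$ into $\dot V=pI-cV$ and applying the same comparison argument produces a uniform bound on $V$.

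The main subtlety is that the positivity and boundedness arguments are not fully independent: the Gr\"onwall factor $\exp(-\int_0^t \beta^u V)$ needed for positivity of $T$ requires $V$ to be bounded on the interval considered, while the boundedness estimate for $N$ uses the sign of $-\delta I$, which in turn needs positivity of $I$. I would close the loop in the standard way, arguing on the maximal interval of existence $[0,t_{\max})$: positivity and the a priori bounds propagate simultaneously, and the resulting uniform bounds rule out finite-time blow-up, so $t_{\max}=\infty$ and both properties hold globally.
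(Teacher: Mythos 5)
Your argument is correct. The positivity part is essentially the paper's: the paper defines $\kappa=\sup\{t>0: T,E,I,V>0\}$ and integrates the same one-sided differential inequalities to show none of the components can vanish at a finite $\kappa$, which is the same loop-closing-on-the-maximal-interval device you describe, just phrased as a contradiction. Where you genuinely diverge is in the boundedness step. The paper works with a single weighted sum $W(t)=T+E+I+\frac{\delta+d(t)}{2p}V$ and derives $\dot W\leq \zeta-mW$ in one shot; you instead run a cascade, bounding $T$ alone, then $N=T+E+I$ via $\dot N=\mu(t)-d(t)N-\delta I\leq \mu^u-d^lN$, and finally feeding the bound on $I$ into the linear $V$-equation. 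Both routes deliver the required uniform bound (each needs $\min_t d(t)>0$ and the positivity of $I$ to discard $-\delta I$). Your cascade is arguably the cleaner of the two: the paper's weight $\frac{\delta+d(t)}{2p}$ is time-dependent, so differentiating $W$ honestly produces an extra term $\frac{d'(t)}{2p}V$ that the paper's computation silently drops, whereas your decomposition involves only constant coefficients and avoids that issue entirely. The trade-off is that the single-function approach, when it works, gives the invariant compact attracting set $\mathcal{D}$ in one formula, while yours assembles the bounds componentwise; the end result is the same.
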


\begin{proof}
Let $\kappa$ be defined as $\kappa=\sup \{t>0: T(t)>0, E(t)>0, I(t)>0, V(t)>0\}$. It is evident that $\kappa>0$. Now, consider the scenarios: either $\kappa=\infty$ or $\kappa<\infty$. In the former case, the solutions trivially remain positive. In the latter case, let us assume the opposite, i.e.\ at least one of $T(\kappa)$, $E(\kappa)$, $I(\kappa)$, or $V(\kappa)$ is zero. 

From system \eqref{1} we obtain
\begin{align*}
& \frac{d T}{d t} \geq -\frac{\beta(t) T V}{(1+C_1 T)(1+C_2 V)}- d(t) T, \\
& \frac{d E}{d t} \geq -k E - d(t) E, \\
& \frac{d I}{d t} \geq -\delta I - d(t) I, \\
& \frac{d V}{d t} \geq -c V,
\end{align*}
which imply the estimates
\begin{align*}
& T(\kappa) \geq T_0 \exp \left\{-\int_0^\kappa\left( \frac{\beta(u) V(u)}{{(1+C_1 T(u))(1+C_2 V(u))}} - d(u)\right) \,du\right\}>0, \\
& E(\kappa) \geq E_0 \exp \left\{-\int_0^\kappa\left(k + d(u)\right) \,du\right\}>0, \\
& I(\kappa) \geq I_0 \exp \left\{-\int_0^\kappa \delta +d(u) \,du\right\}>0, \\
& V(\kappa) \geq V_0 \exp \left\{-c \kappa\right\}>0.
\end{align*}

Thus, $T(\kappa)>0$, $E(\kappa)>0$, $I(\kappa)>0$, and $V(\kappa)>0 \Rightarrow T(t) >0$, $E(t)>0$, $I(t)>0$, and $V(t) >0$ for any $t>0$. This contradicts our assumption that $T(\kappa)=0$, $E(\kappa)$, $I(\kappa)$, and $V(\kappa)=0$. Therefore, $T(t) >0$, $E(t)>0$, $I(t)>0$, and $V(t) >0$, for all $t>0$.

Our next goal is to demonstrate that the solutions of the system remain bounded. Consider the quantity $W(t) = T(t)+E(t)+I(t)+\frac{\delta + d(t)}{2p} V(t)$. The derivative of $W$ can be computed as follows:
\begin{align*}
\dot{W} &= \dot{T}+\dot{E}+\dot{I}+\frac{\delta + d(t)}{2p} \dot{V} \\
&= \mu(t)-d(t) T-d(t) E-\frac{\delta + d(t)}{2} I- c V \\
&\leq \mu(t)-m\left(T+E+I+\frac{\delta + d(t)}{2p} V\right) \leq \zeta -m W,
\end{align*}
where $\zeta = \max_{t \in[0, T_{M})} \mu(t)$ and  $m$ is defined as $m=\min\{\min_{t \in[0, T_{M})}, \min_{t \in[0, T_{M})} \frac{\delta + d(t)}{2p}, c\}$. The inequality can be rewritten as $\dot{W}+m W \leq \zeta$, which forms a linear ordinary differential inequality with a negative coefficient for $W$. Applying the integrating factor $e^{mt}$, we obtain:
$$
W(t) \leq e^{-mt}\left(W(0)-\frac{\zeta}{m}\right)+\frac{\zeta}{m}.
$$
This implies that $0 \leq W(t) \leq \xi$, where $\xi=\frac{\zeta}{m}$.  Consequently, we have $0 \leq T(t), E(t), I(t) \leq \xi$ and $0 \leq V(t) \leq N_1$ for all $t \geq 0$, provided that the initial conditions satisfy $T(0)+E(0)+I(0)+\frac{\delta + d(0)}{2p} V(0) \leq \xi$. Here, we define $N_1=\frac{2p \xi}{\delta+D}$. In summary, we have successfully demonstrated that the solutions are bounded. Therefore, we can state that the set
$$
\mathcal{D} := \left\{ (T, E, I, V) \in \mathbb{R}^4_+ \mid T + E + I \leq 3 \xi, \; V \leq N_1 \right\}
$$
is a positively invariant, compact and attractor.
\end{proof}
\section{Existence and uniqueness of the virus-free $\omega$-periodic solution}
\begin{Lemma}
\label{Lemma1}
The system $\eqref{1}$ admits a unique virus-free periodic solution given by $\mathcal{E}_0=\left(T^*(t), 0,0,0\right)$, where $T^*$ is the unique periodic solution of \eqref{2}. This solution has a period $\omega$.
\end{Lemma}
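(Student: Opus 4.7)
The plan is to reduce the lemma to a scalar linear periodic ODE. Setting $E = I = V = 0$ in system \eqref{1} causes the right-hand sides of the $E$, $I$, and $V$ equations to vanish identically, so the face $\{E = I = V = 0\}$ is forward-invariant under the flow; on this face the dynamics collapse to the scalar equation $\dot T = \mu(t) - d(t) T$, which is \eqref{2}. The lemma therefore reduces to establishing the existence and uniqueness of an $\omega$-periodic solution of this linear scalar equation, together with its strict positivity.

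For the reduced problem I would use the integrating factor $\exp\bigl(\int_0^t d(s)\,ds\bigr)$ to write the solution with initial value $T_0$ in closed form as
\[
T(t;T_0) \;=\; T_0\, e^{-\int_0^t d(s)\,ds} \;+\; \int_0^t e^{-\int_s^t d(u)\,du}\,\mu(s)\,ds.
\]
The Poincaré (period) map $P(T_0) := T(\omega;T_0)$ is then affine with multiplier $\lambda := e^{-\int_0^\omega d(s)\,ds}$. Since $d$ is continuous, strictly positive, and $\omega$-periodic, one has $\lambda \in (0,1)$, so $P$ is a strict contraction on $\mathbb{R}$. The Banach fixed-point theorem then provides a unique fixed point
\[
T^*(0) \;=\; \frac{\int_0^\omega e^{-\int_s^\omega d(u)\,du}\,\mu(s)\,ds}{1 - e^{-\int_0^\omega d(s)\,ds}} \;>\; 0,
\]
and the trajectory $T^*(t)$ issuing from this initial datum is the desired $\omega$-periodic orbit; uniqueness of the fixed point of $P$ immediately yields uniqueness of the periodic solution. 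Setting $\mathcal{E}_0(t) = (T^*(t),0,0,0)$ then gives a virus-free $\omega$-periodic solution of the full system, and any virus-free $\omega$-periodic orbit must arise this way since its $T$-component must be a periodic solution of \eqref{2}.

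Positivity of $T^*(t)$ for all $t \ge 0$ is inherited from Lemma~\ref{Latr} applied to the initial condition $(T^*(0),0,0,0)$, or read off directly from the explicit formula since $\mu > 0$. The only substantive ingredient is the positivity of $\int_0^\omega d(s)\,ds$, which guarantees $\lambda < 1$; I do not anticipate any genuine obstacle, as the argument is entirely standard for linear scalar periodic ODEs with a bounded, continuous, strictly positive decay rate.
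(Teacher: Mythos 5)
Your proposal is correct and follows essentially the same route as the paper: both solve the scalar linear equation \eqref{2} by the integrating factor $e^{\int_0^t d(\xi)\,d\xi}$ and identify the unique initial value making the solution $\omega$-periodic, which requires precisely that $e^{-\int_0^\omega d(s)\,ds}\neq 1$. Your Poincar\'e-map/contraction phrasing and the explicit remarks on invariance of the face $\{E=I=V=0\}$ and on positivity of $T^*$ make the uniqueness claim slightly more airtight than the paper's direct computation, but the underlying argument is the same.
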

\begin{proof}
    From  system \eqref{1}, setting $V = 0$, we have the  equation
    \begin{equation}
    \label{2}
\frac{d T}{d t}=\mu(t)-d(t) T
\end{equation}
with initial condition $T(0)=T_0 \in \mathbb{R}_{+}$.

From \eqref{2}, we have 
$$ \frac{d T}{d t} + d(t) T =\mu(t) .$$
Multiplying by $e^{\int_0^t d(\xi) d\xi}$, we have
$$ \frac{d T}{d t} e^{\int_0^t d(\xi) d\xi} + d(t) T e^{\int_0^t d(\xi) d\xi} =\mu(t)e^{\int_0^t d(\xi) d\xi}. $$
After integrating both sides we get
$$ T(t) = e^{-\int_0^t d(\xi) d\xi} \left( \int_0^t \mu(\tau)e^{\int_0^\tau d(\xi) d\xi} + T_0 \right).$$
To find the virus-free periodic solution $T^*(t)$, we must ensure $T^*(t+\omega)=T^*(t)$, from this, one gets the $\omega$-periodic solution
$$ T^*(t) = e^{-\int_0^t d(\xi) d\xi} \left( \int_0^t \mu(\tau)e^{\int_0^\tau d(\xi) d\xi} + \frac{e^{-\int_0^\omega d(\xi) d\xi} \int_0^\omega \mu(\tau)e^{\int_0^\tau d(\xi) d\xi} d\tau}{1-e^{-\int_0^\omega d(\xi) d\xi}} \right).  $$
\end{proof}
\section{The basic reproduction ratio}
In the preceding section, we introduced our heterogeneous epidemiological model for periodic environments. Now, our focus shifts to a specific aspect of its analysis—calculating the basic reproduction ratio, $\mathcal{R}_0$, and studying the stability of the infection-free equilibrium. To calculate $\mathcal{R}_0$, we employ the technique outlined in \cite{Wendi}. Subsequently, we extend our investigation to assess the stability of the periodic infection-free steady state, utilizing the same methodology. 

We define $\mathcal{F}(t, x)$ as the input rate of newly infected individuals in the $i$-th compartment, $\mathcal{V}^+(t, x)$ as the input rate of individuals through other means, and $\mathcal{V}^-(t, x)$ as the rate of transfer of individuals out of compartment $i$. Specifically:
$$
\mathcal{F}(t, x) = \begin{bmatrix}
    \frac{\beta(t) T V}{(1+C_1 T)(1+C_2 V)} \\
    0 \\
    0 \\
    0 \\
\end{bmatrix},
$$
$$
\mathcal{V}^+(t, x) = \begin{bmatrix}
    0 \\
    k E \\
    p I \\
    \mu(t) \\
\end{bmatrix}, \qquad \mathcal{V}^-(t, x) = \begin{bmatrix}
    (k+d(t)) E\\
    (\delta+d(t)) I \\
    c V \\
    \frac{\beta(t) T V}{(1+C_1 T)(1+C_2 V)}+d(t) T 
\end{bmatrix}
$$
Let $x = (E, I, V, T)$, where $E, I$ and $V$ are the infection-related compartments and $T$ is the uninfected compartment. Using the same notation as \cite{Wendi}, system $\eqref{1}$ can be written as
$$
\frac{d x_i}{d t}=\mathcal{F}_i(t, x)-(\mathcal{V}^-(t, x) - \mathcal{V}^+_i(t, x)) \triangleq f_i(t, x), \quad i=1, \ldots, 4.
$$ It is easy to verify that system $\eqref{1}$ satisfies assumptions (A1)--(A5) in \cite{Wendi}.

Letting $x^* = (0, 0, 0, T^*(t))$ be the unique positive $ \omega$-periodic solution of the model \eqref{1}, set
\begin{align*}
    F(t)&=\left(\frac{\partial \mathcal{F}_i\left(t, x^*\right)}{\partial x_j}\right)_{1 \leq i, j \leq 3} =\begin{bmatrix}
0 & 0 & \beta(t) \frac{T^*(t)}{1+C_1 T^*(t)} \\
0 & 0 & 0 \\
0 & 0 & 0
\end{bmatrix},\\
 G(t)&=\left(\frac{\partial \mathcal{V}_i\left(t, x^*\right)}{\partial x_j}\right)_{1 \leq i, j \leq 3} =\begin{bmatrix}
\kappa + d(t) & 0 & 0 \\
-\kappa & \delta +d(t) & 0 \\
0 & -p & c
\end{bmatrix}.
\end{align*}

Let $\Phi_M(t)$ and $\rho\left(\Phi_M(\omega)\right)$ be the monodromy matrix of the linear $\omega$-periodic system $\frac{d z(t)}{d t}=M(t) z$ and the spectral radius of $\Phi_M(\omega)$, respectively. Let $X(t, s), t \geq s$, be the evolution operator of the linear $\omega$-periodic system
$$
\frac{d y}{d t}=-G(t) y,
$$
that is, for each $s \in \mathbb{R}$, the $3 \times 3$ matrix $X(t, s)$ satisfies
$$
\frac{d X(t, s)}{d t}=-G(t) X(t, s), \quad \forall t \geq s, \quad X(s, s)=I,
$$
where $I$ is the $3 \times 3$ identity matrix. Thus, the monodromy matrix $\Phi_{-G}(t)$ of system \eqref{(3)} equals to $X(t, 0),\ t \geq 0$. 

Let $\psi(s)$ denote the distribution of infected individuals, which exhibits $\omega$-periodicity in $s$. The expression $F(s) \psi(s)$ signifies the rate of new cases arising from individuals infected at time $s$. Beyond this, for $t \geq s$, the term $X(t, s) F(s) \psi(s)$ characterizes the distribution of individuals who contracted the infection at time $s$ and remain infectious at time $t$. Consequently, 
\begin{equation}\label{(3)}
g(t) \coloneqq \int_{-\infty}^t X(t, s) F(s) \psi(s) ds = \int_0^{\infty} X(t, t-a) F(t-a) \psi(t-a) da.
\end{equation}

The function $g(t)$ represents the distribution of cumulative new infections at time $t$, generated by all infected individuals $\psi(s)$ introduced at any time $s \leq t$.

Denote by $C_\omega$ the ordered Banach space of $\omega$-periodic functions from $\mathbb{R}$ to $\mathbb{R}^6$, equipped with the usual maximum norm $\|\cdot\|_{\infty}$. Define the positive cone
\begin{equation}
C_\omega^{+} \coloneqq \left\{\psi \in C_\omega : \psi(t) \geq 0, \forall t \in \mathbb{R}\right\}.
\end{equation}

The linear next infection operator $L\colon  C_\omega \rightarrow C_\omega$ is introduced as
\begin{equation}
(L \psi)(t) = \int_0^{\infty} X(t, t-a) F(t-a) \psi(t-a) da, \quad \forall t \in \mathbb{R}, \psi \in C_\omega.
\end{equation}

The basic reproduction number of \eqref{1} is defined as $\mathcal{R}_0 \coloneqq \rho(L)$, the spectral radius of the operator $L$. For the autonomous case, as derived in \cite{Nali}, this number is given by
$$\mathcal{R}_0 = \frac{p \beta k \mu}{c (d + \delta)(d + k)(d + C_1 \mu)}.
$$
Furthermore, consider the system
 described by the differential equation
\begin{equation}
\frac{d w}{d t} = \left(-G(t) + \frac{1}{\lambda} F(t)\right) w, \quad t \in \mathbb{R},
\end{equation}
where the parameter $\lambda$ is in the interval $(0, \infty)$. Since $F(t)$ is nonnegative and $-G(t)$ is cooperative, it follows that $\rho(W(\omega, \lambda))$ is a continuous and nonincreasing function with respect to $\lambda$ in the interval $(0, \infty)$, and moreover, $\lim_{\lambda \to \infty} \rho(W(\omega, \lambda)) < 1$. These conditions ensure that assumption (A7) is satisfied, which leads to the following two results.

To express the basic reproduction number for the system \eqref{1}, observe that $W(t, \lambda) = \Phi_{(F / \lambda) - G}(t)$ and $\Phi_{F - G}(t) = W(t, 1)$ for all $t \geq 0$. According to Theorem 2.1 in \cite{Wendi}, the basic reproduction number can be defined as $\lambda_0$ such that $\rho(\Phi_{(F / \lambda_0) - V}(\omega)) = 1$. This value of $\lambda_0$ can be straightforwardly calculated.
\section{Stability analysis}
To analyze the stability of the virus-free periodic solution $ \mathcal{E}_0$, we invoke a pivotal theorem from \cite{Wendi} and a lemma from \cite{Nakata} to support our proof.
\begin{theorem}[{\cite[Theorem 2.2]{Wendi}}]
\label{thm22}\leavevmode
\begin{itemize}
   \item[(i)] $\mathcal{R}_0=1$ if and only if $\rho\left(\Phi_{F-V}(\omega)\right)=1$.
\item[(ii)] $\mathcal{R}_0>1$ if and only if $\rho\left(\Phi_{F-V}(\omega)\right)>1$.
\item[(iii)] $\mathcal{R}_0<1$ if and only if $\rho\left(\Phi_{F-V}(\omega)\right)<1$.
\end{itemize}
\end{theorem}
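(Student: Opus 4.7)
The plan is to deduce the three equivalences from a single monotonicity statement about a one-parameter family of Floquet multipliers, together with the identification of $\mathcal{R}_0$ with a distinguished value of the parameter. Concretely, I would study the family of $\omega$-periodic linear systems
\begin{equation*}
\frac{dw}{dt} = \left(-G(t) + \frac{1}{\lambda} F(t)\right) w, \qquad \lambda \in (0, \infty),
\end{equation*}
whose evolution operator is $W(t, \lambda)$, so that $W(\omega, 1) = \Phi_{F-G}(\omega)$ is exactly the object on the right-hand side of the theorem. The quantity $\mathcal{R}_0 = \rho(L)$ defined through the next-infection operator will be linked to this family by showing that $\rho(W(\omega, \mathcal{R}_0)) = 1$.

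First I would collect the qualitative properties of the map $\lambda \mapsto \rho(W(\omega, \lambda))$. Since $-G(t)$ is cooperative (off-diagonal nonnegative) and $F(t)$ is nonnegative, the evolution operator $W(t, \lambda)$ is entrywise nonnegative for every $\lambda > 0$; by Perron--Frobenius applied to $W(\omega, \lambda)$, the spectral radius is a simple dominant eigenvalue with a positive eigenvector. Continuous dependence of ODE solutions on parameters yields continuity of $\rho(W(\omega, \lambda))$ in $\lambda$, and a direct comparison of the generators shows that $\rho(W(\omega, \lambda))$ is strictly decreasing in $\lambda$; moreover $\rho(W(\omega, \lambda)) \to \rho(\Phi_{-G}(\omega)) < 1$ as $\lambda \to \infty$, as indicated in the discussion preceding the theorem. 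These properties already guarantee that there is at most one $\lambda^{\ast} > 0$ for which $\rho(W(\omega, \lambda^{\ast})) = 1$.

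Second, I would prove the identification $\lambda^{\ast} = \mathcal{R}_0$. For a positive eigenfunction $\psi \in C_\omega^{+}$ of $L$ with eigenvalue $\lambda$, one shows via the variation-of-constants formula defining $L$ that the map $t \mapsto \lambda\, \psi(t)$ coincides with the $\omega$-periodic orbit of the system $\dot w = (-G(t) + \tfrac{1}{\lambda} F(t)) w$ passing through $\lambda\, \psi(0)$; equivalently, $\lambda\, \psi(0)$ is a positive eigenvector of $W(\omega, \lambda)$ with eigenvalue $1$, so $\rho(W(\omega, \lambda)) = 1$. Conversely, any $\lambda$ with $\rho(W(\omega, \lambda)) = 1$ produces, through the Perron eigenvector of $W(\omega, \lambda)$, a positive fixed point $\psi$ of $L$ with $L\psi = \lambda\psi$. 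Because $L$ is a positive linear operator whose spectral radius is attained at a positive eigenfunction (Krein--Rutman, using compactness of $L$ inherited from the smoothing properties of the convolution-type integral in its definition), this forces $\lambda^{\ast} = \rho(L) = \mathcal{R}_0$.

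With $\rho(W(\omega, \mathcal{R}_0)) = 1$ established, the three equivalences in the theorem follow immediately from strict monotonicity: $\mathcal{R}_0 \gtreqless 1$ iff $1 \gtreqless \mathcal{R}_0$ in the argument, hence $\rho(\Phi_{F-G}(\omega)) = \rho(W(\omega, 1)) \gtreqless \rho(W(\omega, \mathcal{R}_0)) = 1$ with the opposite inequality, which is exactly what is claimed (with the inequalities aligned after taking reciprocals of the ordering of $\lambda$ values). I expect the main obstacle to be the rigorous spectral correspondence in the second step, i.e.\ verifying that $L$ is compact and strongly positive on $C_\omega$ so that Krein--Rutman applies and the eigenfunction correspondence between $L$ and $W(\omega, \lambda)$ is bijective on the positive cone. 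Everything else is monotone-dynamics bookkeeping once that correspondence is in hand.
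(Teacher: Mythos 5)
The paper does not prove this statement at all: it is imported verbatim as Theorem 2.2 of Wang and Zhao \cite{Wendi}, so there is no in-paper argument to compare yours against. Your sketch is essentially a reconstruction of the proof in that reference --- the family $\frac{dw}{dt} = \left(-G(t) + \lambda^{-1}F(t)\right)w$, the correspondence between positive eigenfunctions of $L$ and $\omega$-periodic orbits of the $\lambda$-system (your variation-of-constants computation is exactly the right identity), and the reduction of the three equivalences to locating the unique root of $\rho(W(\omega,\lambda)) = 1$ relative to $\lambda = 1$. Two technical claims are overstated, however. First, $\lambda \mapsto \rho(W(\omega,\lambda))$ is only continuous and \emph{nonincreasing} under the standing hypotheses (this is assumption (A7) and the remark preceding the theorem); strict decrease requires extra irreducibility and is not what Wang--Zhao use. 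The equivalences are instead extracted from the uniqueness of the solution of $\rho(W(\omega,\lambda)) = 1$ (their Theorem 2.1) together with continuity and $\lim_{\lambda\to\infty}\rho(W(\omega,\lambda)) < 1$: for instance, if $\rho(\Phi_{F-G}(\omega)) > 1$, the intermediate value theorem produces a root $\lambda_0 > 1$, which must then equal $\mathcal{R}_0$; and conversely $\mathcal{R}_0 > 1$ forces $\rho(W(\omega,1)) > 1$ because equality would contradict uniqueness of the root and $\rho(W(\omega,1)) < 1$ would contradict monotonicity. Second, $L$ is in general not compact on $C_\omega$, so Krein--Rutman cannot be invoked as stated; the identification of $\rho(L)$ with the distinguished root is carried out in \cite{Wendi} by a direct positivity argument rather than through compactness, and the degenerate case $\mathcal{R}_0 = 0$ has to be handled separately. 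Neither issue is fatal --- both are repaired exactly as in the cited source --- but as written the strict-monotonicity and Krein--Rutman steps are genuine gaps in your argument.
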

\begin{Lemma}
\label{Lemma2}
Let $A(t)$ be a continuous, cooperative, irreducible and $\omega$-periodic matrix function, let $\Phi_A(t)$ be the fundamental matrix solution of
\begin{equation*}
\label{eqq}
x^{\prime}=A(t) x
\end{equation*}
and let $\sigma=\frac{1}{\omega} \ln \left(\rho\left(\Phi_A(\omega)\right)\right)$, where $\rho$ denotes the spectral radius. Then, there exists a positive $\omega$-periodic function $v(t)$ such that $\mathrm{e}^{\sigma t} v(t)$ is a solution of \eqref{eqq}.
\end{Lemma}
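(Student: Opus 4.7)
The plan is to combine classical Floquet theory with a Perron--Frobenius argument adapted to cooperative periodic systems. I would first recall that, by Floquet theory, the fundamental matrix (normalized by $\Phi_A(0)=I$) satisfies $\Phi_A(t+\omega)=\Phi_A(t)\Phi_A(\omega)$, so every solution has the form $x(t)=\Phi_A(t)c$ for some $c\in\mathbb{R}^n$. My aim is to choose $c=v_0$ to be an eigenvector of the monodromy matrix $\Phi_A(\omega)$ associated with the spectral radius $\rho(\Phi_A(\omega))=e^{\sigma\omega}$; if such a $v_0$ can be taken strictly positive, then setting $v(t):=e^{-\sigma t}\Phi_A(t)v_0$ produces the desired function.

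Next I would verify that $\Phi_A(\omega)$ does admit a strictly positive eigenvector at its spectral radius. Since $A(t)$ is cooperative, Kamke's comparison theorem implies that $\Phi_A(t)$ leaves the nonnegative cone $\mathbb{R}^n_+$ invariant for every $t\geq 0$; hence $\Phi_A(\omega)$ is entrywise nonnegative. Irreducibility of $A(t)$ then upgrades this to strict entrywise positivity of $\Phi_A(\omega)$: the standard argument interprets the sign pattern of $A(t)$ as a strongly connected directed graph, so any two components can be linked via a chain of nonzero couplings that, by continuity over the interval $[0,\omega]$, contribute a strictly positive amount to the corresponding entry of $\Phi_A(\omega)$. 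With $\Phi_A(\omega)$ a positive matrix, the Perron--Frobenius theorem yields a strictly positive eigenvector $v_0$ of $\Phi_A(\omega)$ with eigenvalue $e^{\sigma\omega}=\rho(\Phi_A(\omega))$.

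Finally, I would check the two required properties of $v(t)=e^{-\sigma t}\Phi_A(t)v_0$. Positivity follows because $v_0>0$ and $\Phi_A(t)$ is a nonsingular nonnegative matrix mapping positive vectors to positive vectors. Periodicity follows from the Floquet identity and the eigenvector equation:
\begin{align*}
v(t+\omega) &= e^{-\sigma(t+\omega)}\Phi_A(t+\omega)v_0 = e^{-\sigma(t+\omega)}\Phi_A(t)\Phi_A(\omega)v_0 \\
 &= e^{-\sigma(t+\omega)}\Phi_A(t)\,e^{\sigma\omega}v_0 = e^{-\sigma t}\Phi_A(t)v_0 = v(t).
\end{align*}
By construction $e^{\sigma t}v(t)=\Phi_A(t)v_0$ solves $x'=A(t)x$ with initial datum $v_0$, which delivers the claim.

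The main technical obstacle is the second step, namely promoting nonnegativity of $\Phi_A(\omega)$ to strict positivity so that Perron--Frobenius produces an eigenvector in the interior of the cone rather than merely on its boundary. This is exactly the place where the irreducibility hypothesis on $A(t)$ is indispensable; without it, the monodromy matrix could have a block-triangular structure preventing the appearance of a strictly positive Floquet eigenvector, and the desired representation $x(t)=e^{\sigma t}v(t)$ with $v>0$ would fail.
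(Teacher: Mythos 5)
The paper does not prove this lemma at all: it is imported verbatim as a known result (cited from \cite{Nakata}, and originally due to Zhang and Zhao), so there is no in-paper argument to compare against. Your proof is the standard one for this statement and is essentially correct: Floquet's identity $\Phi_A(t+\omega)=\Phi_A(t)\Phi_A(\omega)$, nonnegativity of $\Phi_A(t)$ from cooperativity, strict positivity of $\Phi_A(\omega)$ from irreducibility, a Perron--Frobenius eigenvector $v_0\gg 0$ with eigenvalue $e^{\sigma\omega}$, and then $v(t)=e^{-\sigma t}\Phi_A(t)v_0$ is positive, $\omega$-periodic, and $e^{\sigma t}v(t)$ solves the equation. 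The only step you gloss over is the promotion of entrywise nonnegativity to strict positivity of $\Phi_A(\omega)$: your ``strongly connected digraph plus continuity'' sketch is the right idea, but a complete argument either invokes the standard result for cooperative irreducible linear systems (e.g.\ Smith's monotone systems theory, that $\Phi_A(t)\gg 0$ for all $t>0$) or works a little harder, since irreducibility of $A(t)$ at each instant does not by itself hand you positivity of a particular product of exponentials without the variation-of-constants/Peano--Baker expansion argument. With that reference or argument supplied, the proof is complete; one could also note that strict positivity of $\Phi_A(t)v_0$ for $t\in(0,\omega)$ (not just at $t=0$ and via nonsingularity) follows from the same fact, which is the cleanest way to see $v(t)\gg 0$ on all of $\mathbb{R}$.
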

\subsection{Local and global stability of the virus-free periodic solution}

\begin{theorem}
   If $\mathcal{R}_0 < 1$, the virus-free periodic solution $\mathcal{E}_0$ is both locally and globally asymptotically stable in $\mathbb{R}^4_+$
\end{theorem}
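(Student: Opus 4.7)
The plan is to split the theorem into its local and global parts. For local asymptotic stability, I would linearize system \eqref{1} about $\mathcal{E}_0=(T^*(t),0,0,0)$. Because the infection terms vanish at $V=0$, the Jacobian at $\mathcal{E}_0$ has a block-triangular structure in which the $T$-equation decouples from the infected block $(E,I,V)$, whose Jacobian is precisely $F(t)-G(t)$. Since $\mathcal{R}_0<1$, Theorem~\ref{thm22} gives $\rho(\Phi_{F-G}(\omega))<1$, so by Floquet theory the infected subsystem contracts exponentially; meanwhile the $T$-block has Floquet multiplier $\exp(-\int_0^\omega d(s)\,ds)\in(0,1)$. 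Combining the two blocks yields local asymptotic stability of $\mathcal{E}_0$.

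For global asymptotic stability, the strategy is a perturbation/comparison argument driven by the fact that $T(t)\le T^*(t)+\varepsilon$ eventually. From the first equation of \eqref{1} we have
\[
\frac{dT}{dt}\le \mu(t)-d(t)T,
\]
and, by Lemma~\ref{Lemma1}, every solution of the comparison equation \eqref{2} converges to $T^*(t)$. Hence for every $\varepsilon>0$ there exists $t_1>0$ with $T(t)\le T^*(t)+\varepsilon$ for all $t\ge t_1$.

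Plugging this upper bound into the infection term, which is monotone in $T$, yields the cooperative vector differential inequality
\[
\frac{d}{dt}\begin{pmatrix}E\\I\\V\end{pmatrix}\le \bigl(F_\varepsilon(t)-G(t)\bigr)\begin{pmatrix}E\\I\\V\end{pmatrix},\qquad t\ge t_1,
\]
where $F_\varepsilon(t)$ is obtained from $F(t)$ by replacing $T^*(t)/(1+C_1T^*(t))$ with $(T^*(t)+\varepsilon)/(1+C_1(T^*(t)+\varepsilon))$. Because the spectral radius of the monodromy operator depends continuously on $\varepsilon$ and equals $\rho(\Phi_{F-G}(\omega))<1$ at $\varepsilon=0$, one can fix $\varepsilon>0$ so small that $\rho(\Phi_{F_\varepsilon-G}(\omega))<1$ still holds. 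The matrix $F_\varepsilon(t)-G(t)$ is continuous, $\omega$-periodic, cooperative and irreducible (the off-diagonal entries $k$, $p$ and the $\beta$-coupling are all positive), so Lemma~\ref{Lemma2} provides a positive $\omega$-periodic $v(t)$ such that $e^{\sigma t}v(t)$ solves the associated linear system, with $\sigma=\omega^{-1}\ln\rho(\Phi_{F_\varepsilon-G}(\omega))<0$. Choosing a scalar multiple of $v(t_1)$ that dominates $(E(t_1),I(t_1),V(t_1))$ and applying the comparison principle for cooperative systems gives $(E(t),I(t),V(t))\to(0,0,0)$ exponentially.

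Once the infected compartments vanish, the $T$-equation becomes an asymptotically autonomous perturbation of \eqref{2}, so a standard limiting-equation argument, or a direct Gr\"onwall estimate on $T(t)-T^*(t)$, yields $T(t)\to T^*(t)$. Together with the local stability established first, this proves global asymptotic stability of $\mathcal{E}_0$ in $\mathbb{R}^4_+$. The main technical obstacle is the perturbation step: one must justify continuity of $\varepsilon\mapsto\rho(\Phi_{F_\varepsilon-G}(\omega))$ at $\varepsilon=0$, and verify the cooperativity and irreducibility of $F_\varepsilon(t)-G(t)$ needed to invoke Lemma~\ref{Lemma2}; both hinge on the off-diagonal positivity of $-G(t)$ and the structure of the Crowley--Martin term, and everything else is a routine assembly of standard comparison arguments.
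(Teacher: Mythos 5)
Your proposal is correct and follows essentially the same route as the paper: Theorem~\ref{thm22} to get $\rho(\Phi_{F-G}(\omega))<1$, the eventual bound $T(t)\le T^*(t)+\varepsilon$ fed into the monotone Crowley--Martin term to produce a cooperative linear comparison system whose monodromy spectral radius stays below one for small $\varepsilon$, Lemma~\ref{Lemma2} plus the comparison principle to drive $(E,I,V)\to 0$, and then convergence of $T$ to $T^*$. If anything, your write-up is slightly more careful than the paper's at two points --- you justify $T(t)\le T^*(t)+\varepsilon$ by comparison with equation~\eqref{2} (the paper attributes it to Lemma~\ref{Lemma1}, which only constructs $T^*$), and you make the local-stability step explicit via the block-triangular Floquet argument rather than just citing \cite{Wendi} --- but the substance is identical.
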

\begin{proof}
  To establish that $\mathcal{E}_0$ is locally asymptotically stable when $\mathcal{R}_0 < 1$, we apply two fundamental theorems from \cite{Wendi}. Additionally, we need to show that $\mathcal{E}_0$ is globally attractive in $\mathbb{R}^3_+$ under the condition $\mathcal{R}_0 < 1$.

  According to Lemma \ref{Lemma1}, for any $\epsilon > 0$, there exists a time $T_1 > 0$ such that $T(t) \leq T^*(t) + \epsilon$ for all $t > T_1$. Since the function $ f(x) = \frac{x}{1+c x} $ is increasing over~$\mathbb{R}$, it follows that
  $$
  \begin{aligned}
    \frac{\beta(t) T(t) V(t)}{(1+C_1 T(t))(1+C_2 V(t))} &\leq \frac{\beta(t) T(t)}{1+C_1 T(t)} \leq \beta(t) \frac{T^*(t) + \epsilon}{1+ C T^*(t)},
  \end{aligned}
  $$
  for $ t > T_1 $. Consequently, from system \eqref{1}, we obtain
  $$
  \begin{cases}
  E^{\prime} \leq \beta(t) \frac{T^*(t) + \epsilon}{1+ C T^*(t)} - (k + d(t)) E, \\
  I^{\prime} = k E - (\delta + d(t)) I, \\
  V^{\prime} = p I - c V.
  \end{cases}
    $$

  Define the matrix
  $$
  M_1(t) = 
  \begin{bmatrix}
  0 & 0 & \beta(t) \\
  0 & 0 & 0 \\
  0 & 0 & 0
  \end{bmatrix}
  .
  $$

  Using Theorem \ref{thm22}, we deduce that $\rho\left(\Phi_{F-G}(\omega)\right) < 1$. Select $\epsilon > 0$ such that $\rho\left(\Phi_{F-G+\varphi(\epsilon) M_1}(\omega)\right) < 1$ and consider the modified system
  $$
  \begin{bmatrix}
  \Bar{E}^{\prime} \\
  \Bar{I}^{\prime} \\
  \Bar{V}^{\prime}
  \end{bmatrix}
 = \left(F(t) - G(t) + \varphi(\epsilon) M_1(t)\right) 
  \begin{bmatrix}
  \Bar{E} \\
  \Bar{I} \\
  \Bar{V}
  \end{bmatrix}
.
  $$
By applying Lemma \ref{Lemma2} and the standard comparison principle, we find $\omega$-periodic functions $v_1(t)$, $v_2(t)$ and $v_3(t)$ such that
  $$
  E(t) \leq v_1(t) e^{\sigma t}, \quad I(t) \leq v_2(t) e^{\sigma t}, \quad \text{and} \quad V(t) \leq v_3(t) e^{\sigma t},
  $$
  where $\sigma = \frac{1}{\omega} \ln \left(\rho\left(\Phi_{F-G+\varphi(\epsilon) M_1}(\omega)\right)\right)$. This implies that $ E(t) \rightarrow 0 $, $ I(t) \rightarrow 0 $, and $ V(t) \rightarrow 0$ as $ t \rightarrow \infty $.

  Therefore, we have
  $$
  \lim_{t \rightarrow \infty} \left(T(t) - T^*(t)\right) = 0.
  $$
  The proof is now complete.
\end{proof}
\subsection{Persistence of the infective compartment and existence of endemic periodic orbits}
Let $ u(t, X_0) $ be the solution of system \eqref{1} with the initial condition $ X_0 = (T^0, E^0, I^0, V^0) $. By the fundamental theorem of existence and uniqueness \cite{Perko}, the solution $ u(t, X_0) $ is unique, with $ u(0, X_0) = X_0 $.

Define the Poincaré map $ Q \colon \mathbb{R}^4_+ \rightarrow \mathbb{R}^4_+ $ associated with system \eqref{1} as $ Q(X_0) = u(\omega, X_0) $ for all $ X_0 \in \mathbb{R}^4_+ $. Consider the set $\Gamma = \left\{ (T, E, I, V) \in \mathbb{R}_+^4 \right\}$, its interior $\Gamma_0 = \operatorname{Int}(\mathbb{R}_+^4)$, and its boundary $\partial \Gamma_0 = \Gamma \setminus \Gamma_0$. Note that both $\Gamma$ and $\Gamma_0$ are positively invariant under the dynamics of the system, and that $Q$ is point dissipative.

Now, define the set
$$
M_{\partial} = \left\{\left(T^0, E^0, I^0, V^0\right) \in \partial \Gamma_0 : Q^n\left(T^0, E^0, I^0, V^0\right) \in \partial \Gamma_0, \, \text{for all } n \geq 0\right\}.
$$
To apply the theory of uniform persistence as outlined in \cite{Zhao}, we need to prove that 
\begin{equation}
\label{equa}
     M_{\partial} = \left\{(T, 0, 0, 0) \in \mathbb{R}^4_+ : T \geq 0 \right\}.
\end{equation}

Note that  $M_{\partial} \supseteq\{(T,0, 0,0) \in \mathbb{R}^4_+: T \geq 0\}$, to show that $M_{\partial} \subseteq\{(T,0, 0,0) \in \mathbb{R}^4_+: T \geq 0\}$ We begin by using a contradiction argument. Assume that $ M_{\partial} $ is not a subset of $ \{(T, 0, 0, 0) \in \mathbb{R}^4_+ : T \geq 0\} $. This implies there exists an initial condition $ (T(0), E(0), I(0), V(0)) \in \partial \Gamma_0 $ such that for some $ m_1 \geq 0 $, the solution satisfies $ \left(E\left(m_1 \omega\right), I\left(m_1 \omega\right), V\left(m_1 \omega\right)\right)^T > 0 $.

Now, consider $ m_1 \omega $ as the initial time. From the positivity of the solutions discussed earlier, we have $ \left(T(t), E(t), I(t), V(t)\right) > 0 $ for all $ t > m_1 \omega $. This implies that $ \left(T(t), E(t), I(t), V(t)\right) \in \Gamma_0 $ for all $ t > m_1 \omega $, meaning the solution enters the interior of $\Gamma$, contradicting the assumption that it remains on $ \partial \Gamma_0 $. 

Thus, our assumption must be false, and it follows that $ M_{\partial} \subseteq \{(T, 0, 0, 0) \in \mathbb{R}^4_+ : T \geq 0\} $, thereby proving that equality \eqref{equa} holds. We deduce, therefore, the uniform persistence of the disease as follows.
\begin{theorem}
    Suppose that $\mathcal{R}_0 > 1$. Then, system \eqref{1} has at least one positive periodic solution. Moreover, there exists a constant $\eta > 0$ such that for any initial condition $\left(T^0, E^0, I^0, V^0\right) \in \Gamma_0$, we have $$\liminf_{t \rightarrow \infty} (E(t), I(t), V(t)) \geq (\eta, \eta, \eta).$$
\end{theorem}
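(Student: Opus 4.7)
The plan is to invoke the acyclicity-based uniform persistence framework of Zhao applied to the Poincaré map $Q\colon \mathbb{R}_+^4\to\mathbb{R}_+^4$ defined by $Q(X_0)=u(\omega,X_0)$. By Lemma \ref{Latr}, $Q$ is point dissipative with the compact attractor $\mathcal{D}$, and both $\Gamma$ and $\Gamma_0$ are positively invariant. The characterization of $M_\partial$ as $\{(T,0,0,0):T\ge0\}$ has already been established in the excerpt, so on $M_\partial$ the dynamics reduce to the scalar equation \eqref{2}. By Lemma \ref{Lemma1}, the map $Q$ restricted to $M_\partial$ admits exactly one fixed point, namely $P^*=(T^*(0),0,0,0)$, and every orbit in $M_\partial$ converges to $P^*$. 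Hence $\{P^*\}$ is an isolated, acyclic covering of $\bigcup_{X_0\in M_\partial}\omega(X_0)$ in $M_\partial$.

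The heart of the proof is to show that $\{P^*\}$ is a uniform weak repeller for orbits starting in $\Gamma_0$, i.e.\ there exists $\eta_0>0$ with
\[
\limsup_{n\to\infty}\bigl\|Q^n(X_0)-P^*\bigr\|\ge \eta_0\qquad\text{for every }X_0\in\Gamma_0.
\]
To obtain this, I use the hypothesis $\mathcal{R}_0>1$ together with Theorem \ref{thm22}, which gives $\rho(\Phi_{F-G}(\omega))>1$. By continuity of the spectral radius with respect to parameters, I can choose $\epsilon>0$ so small that $\rho\bigl(\Phi_{F-G-\epsilon M_2}(\omega)\bigr)>1$ for an appropriate perturbation matrix $M_2(t)$ that captures the $O(\epsilon)$ corrections to $\beta(t) T/(1+C_1 T)(1+C_2 V)$ when $T$ is within $\epsilon$ of $T^*(t)$ and $V\le\epsilon$. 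I then argue by contradiction: if the conclusion failed, there would exist $X_0\in\Gamma_0$ with $\limsup_{n\to\infty}\|Q^n(X_0)-P^*\|<\eta_0$. For $\eta_0$ chosen sufficiently small this forces $T(t)\ge T^*(t)-\epsilon$ and $V(t)\le\epsilon$ for all large $t$, so by monotonicity of $x\mapsto x/(1+C_2 x)$ the $(E,I,V)$-subsystem is bounded below by the cooperative linear periodic system driven by $F(t)-G(t)-\epsilon M_2(t)$. Applying Lemma \ref{Lemma2} to this lower system yields a positive $\omega$-periodic function $v(t)$ and exponent $\sigma=\tfrac{1}{\omega}\ln\rho\bigl(\Phi_{F-G-\epsilon M_2}(\omega)\bigr)>0$ such that $(E(t),I(t),V(t))\ge v(t)\mathrm{e}^{\sigma t}$ componentwise after some time, which blows up, contradicting boundedness from Lemma \ref{Latr}.

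Having shown that $\{P^*\}$ is isolated in $\Gamma$ and that $W^s(P^*)\cap\Gamma_0=\varnothing$, the hypotheses of Theorem 1.3.1 and Theorem 3.1.1 in Zhao's monograph are satisfied, so $Q$ is uniformly persistent with respect to $(\Gamma_0,\partial\Gamma_0)$. This yields $\eta>0$ independent of $X_0\in\Gamma_0$ such that $\liminf_{n\to\infty}d(Q^n(X_0),\partial\Gamma_0)\ge\eta$; passing back to the flow and using continuous dependence on initial data over one period converts this discrete-time lower bound into the continuous-time statement $\liminf_{t\to\infty}(E(t),I(t),V(t))\ge(\eta,\eta,\eta)$ (possibly shrinking $\eta$). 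Existence of a positive $\omega$-periodic solution then follows from Theorem 1.3.6 in the same reference: the map $Q$, being a compact, uniformly persistent map on the positively invariant compact set $\mathcal{D}\cap\overline{\Gamma_0}$, admits a fixed point in $\Gamma_0$, whose orbit under the flow is the sought periodic solution.

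The main obstacle I anticipate is the uniform weak repulsion step: the perturbation matrix $M_2(t)$ must be chosen carefully so that the Jacobian of the infection block at $(T^*(t)-\epsilon, 0, 0, \epsilon)$ is bounded below in the cooperative sense by $F(t)-G(t)-\epsilon M_2(t)$ uniformly in $t\in[0,\omega]$, while simultaneously preserving $\rho\bigl(\Phi_{F-G-\epsilon M_2}(\omega)\bigr)>1$; this requires combining the monotonicity of the Crowley--Martin response in each variable with the continuity of the Floquet multiplier with respect to the coefficient matrix.
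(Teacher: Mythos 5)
Your proposal follows essentially the same route as the paper: the Poincaré map $Q$ with Zhao's acyclicity/persistence framework, the weak-repulsion step proved by contradiction via comparison with a perturbed cooperative linear periodic system whose Floquet multiplier exceeds one (using Theorem~\ref{thm22} and Lemma~\ref{Lemma2}), and Theorem~1.3.6 of Zhao to extract the positive fixed point of $Q$. The only cosmetic differences are that the paper routes the lower bound on $T(t)$ through an auxiliary perturbed scalar equation rather than directly through continuous dependence, and it adds an explicit final argument that the $T$-component of the fixed point is positive.
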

\begin{proof}
    The proof follows a similar approach to that presented in \cite{Almuashi,Miled}.  We start by proving that the trajectory of the system \eqref{1} is uniformly persistent with respect to $\left(\Gamma_0, \partial \Gamma_0\right)$ by using (Theorem 3.1.1 in \cite{Zhao}). Recall that using theorem \eqref{thm22}, we obtain that $\rho\left(\varphi_{F-G}(T)\right)>1$. Therefore, there exists $\nu>0$ small enough and satisfying that $r\left(\varphi_{F-G-\nu M_1}(T)\right)>1$. Let us consider the following perturbed equation.
    \begin{equation}
    \label{perteq}
        \frac{d T_{\alpha}}{d t}= \mu(t) - \alpha\frac{\beta(t) T_{\alpha} }{(1+C_1 T_{\alpha})(1+C_2 \alpha)}- d(t) T_{\alpha}
    \end{equation}
    The function $Q$ associated with the perturbed equation \eqref{perteq} has a unique positive fixed point $\left(\bar{T}_\alpha^0\right)$ that it is globally attractive in $\mathbb{R}_{+}$. We apply the implicit function theorem to deduce that $\left(\bar{T}_\alpha^0\right)$ is continuous with respect to $\alpha$. Therefore, we can choose $\alpha>0$ small enough and satisfying $\bar{T}_\alpha(t)>$ $\bar{T}(t)-\nu$ for all $t>0$. Let $M_1=\left(\bar{T}^0, 0,0, 0\right)$. Since the trajectory is continuous with respect to the initial condition, there exists $\alpha^*$ satisfying $\left(T^0, E^0, I^0, V^0\right) \in \Gamma_0$ with $\|\left(T^0, E^0, I^0, V^0\right)-$ $u\left(t, M_1\right) \| \leq \alpha^*$; it holds that

$$
\left\|u\left(t,\left(T^0, E^0, I^0, V^0\right)\right)-u\left(t, M_1\right)\right\|<\alpha \text {\quad for } 0 \leq t \leq T
$$
We prove by contradiction that
\begin{equation}
\label{eqsup}
\limsup _{n \rightarrow \infty} d\left(P^n\left(T^0, E^0, I^0, V^0\right), M_1\right) \geq \alpha^* \qquad\forall\left(T^0, E^0, I^0, V^0\right) \in \Gamma_0
\end{equation}
Suppose that $\limsup _{n \rightarrow \infty} d\left(Q^n\left(T^0, E^0, I^0, V^0\right), M_1\right)<\alpha^*$ for some $\left(T^0, E^0, I^0, V^0\right) \in \Gamma_0$. We can assume that $d\left(Q^n\left(T^0, E^0, I^0, V^0\right), M_1\right)<\alpha^*$ for all $n>0$. Therefore
$$
\left\|u\left(t, Q^n\left(T^0, E^0, I^0, V^0\right)\right)-u\left(t, M_1\right)\right\|<\alpha\qquad \forall n>0 \text { and } 0 \leq t \leq \omega . 
$$
For all $t \geq 0$, let $t=n \omega+t_1$, with $t_1 \in[0, \omega)$ and $n=\big[\frac{t}{\omega}\big]$ (i.e.\ the greatest integer $\leq \frac{t}{\omega}$). Then, we get
$$
\left\|u\left(t,\left(T^0, E^0, I^0, V^0\right)\right)-u\left(t, M_1\right)\right\|=\left\|u\left(t_1, P^n\left(T^0, E^0, I^0, V^0\right)\right)-u\left(t_1, M_1\right)\right\|<\alpha \text { for all } t \geq 0
$$
Set $(T(t), E(t), I(t), V(t))=u\left(t,\left(T^0, E^0, I^0, V^0\right)\right)$. Therefore $0 \leq E(t), I(t),V(t) \leq \alpha, t \geq 0$ and
\end{proof}
 \begin{equation}
    \label{perteq}
        \frac{d T_{\alpha}}{d t} \geq \mu(t) - \alpha\frac{\beta(t) T }{(1+C_1 T)(1+C_2 \alpha)}- d(t) T
    \end{equation}
The fixed point $\bar{T}_\alpha^0$ of the function $Q$ associated with the perturbed system \eqref{perteq} is globally attractive such that $\bar{T}_\alpha(t)>\bar{T}(t)-\nu$, then, there exists $T_2>0$ large enough and satisfying the condition that $T(t)>\bar{T}(t)-\nu$. Therefore, for $t>T_2$,
$$
\begin{dcases}\frac{d E}{d t} \geq\frac{\beta(t) (\bar{T}(t)-\nu) V}{(1+C_1  (\bar{T}(t)-\nu))(1+C_2 V)}-k E - d(t) E, \\
 \frac{d I}{d t}=k E-\delta I - d(t) I, \\
 \frac{d V}{d t}=p I-c V.\end{dcases}
$$
Note that we have the condition that $\rho\left(\varphi_{F-G-\nu M_2}(\omega)\right)>1$. Applying \eqref{Lemma2} and the comparison principle, there exists a positive $\omega$-periodic trajectory $y_2(t)$ satisfying the condition that $J(t) \geq e^{k_2 t} y_2(t)$ with $k_2=\frac{1}{\omega} \ln \rho\left(\varphi_{F-G-\nu M_2}(\omega)\right)>0$, which implies that $\lim _{t \rightarrow \infty} E(t)=\infty$, $\lim _{t \rightarrow \infty} I(t)=\infty$ and $\lim _{t \rightarrow \infty} V(t)=\infty$ which is impossible since the trajectories are bounded. Therefore, the inequality \eqref{eqsup} is satisfied and $Q$ is weakly uniformly persistent with respect to $\left(\Gamma_0, \partial \Gamma_0\right)$. From Lemma \eqref{Latr} $ P$ has a global attractor. It follows that $M_1$ is an isolated invariant set in $X$ and $W^s\left(M_1\right) \cap \Gamma_0=\emptyset$. It is clear that every solution in $M_{\partial}$ converges to $M_1$ and $M_1$ is acyclic in $M_{\partial}$. From (Theorem 1.3.1 and Remark 1.3.1 in \cite{Zhao}), we obtain that $Q$ is uniformly persistent with respect to $\left(\Gamma_0, \partial \Gamma_0\right)$. Moreover, according to (Theorem 1.3.6 in \cite{Zhao}), $Q$ has a fixed point $\left(\tilde{T}^0, \tilde{E}^0, \tilde{I}^0, \tilde{V}^0\right) \in \Gamma_0$. We can see that $$\left(\tilde{T}^0, \tilde{E}^0, \tilde{I}^0, \tilde{V}^0\right) \in R_{+} \times \operatorname{Int}\left(R_{+}^3\right)$$
We prove also by contradiction that $\tilde{T}^0 > 0$. Assume that $\tilde{T}^0 = 0$. Using the first equation of the system $\eqref{1}$, verifies that 
$$ \dot{\tilde{T}}(t) \geq \mu(t) -\frac{\beta(t) \tilde{T}(t) \tilde{V}(t)}{(1+C_1 \tilde{T}(t))(1+C_2 \tilde{V}(t))}- d(t) \tilde{T}(t)$$
with $\tilde{T}^0=\tilde{T}(l \omega)=0, l=1,2,3, \ldots$ By Lemma \eqref{Latr} , for any $r>0$, there exists a sufficiently large enough $T_3>0$, such that $\tilde{V}(t) \leq N_1+r for t>T_3$,  Additionally, it is straightforward to see that $x/(1+cx) \leq x $.  Then, we have
$$ \dot{\tilde{T}}(t) \geq \mu(t) - \left( \beta(t) ( N_1 + r ) + d(t) \right) \tilde{T}(t), \quad \text{for} \; t \geq T_3 $$
There exists a large $\bar{l}$ such that $l \omega>T_3$ for $l>\bar{l}$. By the comparison principle, we obtain

$$
\begin{aligned}
\tilde{T}(l \omega)= & e^{-\int_0^{l \omega}\left(\beta(u) (N_1 +r) + d(u)\right) d u} \\
& \times\left(\tilde{T}^0+\int_0^{l \omega} \mu (s)  e^{\int_0^s\left(\beta(u) (N_1 +r) + d(u)\right) d u} ds \right)>0
\end{aligned}
$$

for any $l>\bar{l}$. Then, we see a contradiction. Thus, $\tilde{T}^0>0$ and $\left(\tilde{T}^0, \tilde{E}^0, \tilde{I}^0, \tilde{V}^0\right)$ is a positive $\omega$-periodic solution of $\eqref{1}$. Thus, the proof is complete.
\section{Numerical simulation}
For the numerical simulations, we use circadian rhythms as an example, which represent biological processes following approximately 24-hour cycles. To incorporate these periodic fluctuations into our model, we define the relevant periodic parameters using sine functions:  
$$
\begin{cases}
\mu(t) = \mu_0 + \mu_1 \sin(\omega t), \\
\beta(t) = \beta_0 + \beta_1 \sin(\omega t), \\
d(t) = d_0 + d_1 \sin(\omega t).
\end{cases}
$$  
Sine functions are particularly suited for modeling periodic behaviors in processes that start from a baseline level rather than a peak, as sine waves naturally begin at zero. This choice aligns well with the dynamics of circadian rhythms, where many biological processes exhibit oscillations that initiate from a neutral or equilibrium state. While other periodic forms, such as cosine-based functions, have been widely used in the literature \cite{Miled, Almuashi}, we selected sine functions to better represent the specific phase characteristics of the oscillatory processes considered in our model.

The fixed constants used for the numerical simulations are presented in Table~\eqref{tabN1}. These parameter values were carefully chosen to validate the theoretical results of our model.
. These include the base parameters $\mu_0,\beta_0,d_0$ and their respective amplitudes $\mu_1,\beta_1,d_1$. The angular frequency $\omega = \frac{2 \pi}{24}$ corresponds to the period of the circadian rhythm. \\$ $\\
\begin{table}[H] 
\centering
\caption{Fixed constants used for numerical simulation} \label{tabN1}
\begin{tabular}{|c|c|c|c|c|c|c|c|}
\hline
Parameter & $\mu_0$ & $\beta_0$ & $d_0$ & $\mu_1$ & $\beta_1$ & $d_1$ & $\omega$ \\
\hline
Value & 0.1 & 0.3 & 0.01 & 0.05 & 0.1 & 0.005 & $\frac{2 \pi}{24}$ \\
\hline
\end{tabular}
\end{table}
We consider two distinct cases to illustrate the asymptotic behavior of the solution to the model given by equation \eqref{1}. In the first case, shown in Figure \eqref{fig1}, where the basic reproduction number satisfies the condition $\mathcal{R}_0 < 1$, the approximate solution of the model \eqref{1} converges to the virus-free periodic trajectory $\mathcal{E}_0 = \left( T^*(t), 0, 0, 0 \right)$. This outcome indicates that when $\mathcal{R}_0 < 1$, the infection dies out over time, leading to a stable periodic state characterized by the absence of the virus. In contrast, in the second case, depicted in Figure \eqref{fig2}, where the basic reproduction number satisfies $\mathcal{R}_0 > 1$, the approximate solution of the model \eqref{1} exhibits a different asymptotic behavior. In this scenario, the solution does not converge to the virus-free state; instead, it asymptotically approaches a periodic solution where the infection persists over time. This indicates that when $\mathcal{R}_0 > 1$, the virus maintains a presence in the population, and the system reaches a new periodic state in which the infection continues to exist. These two cases, therefore, demonstrate the critical threshold effect dictated by the value of $\mathcal{R}_0$, highlighting the conditions under which the infection either dies out or persists within the host population.
\begin{figure}[H]
    \centering
    \includegraphics[width=\textwidth]{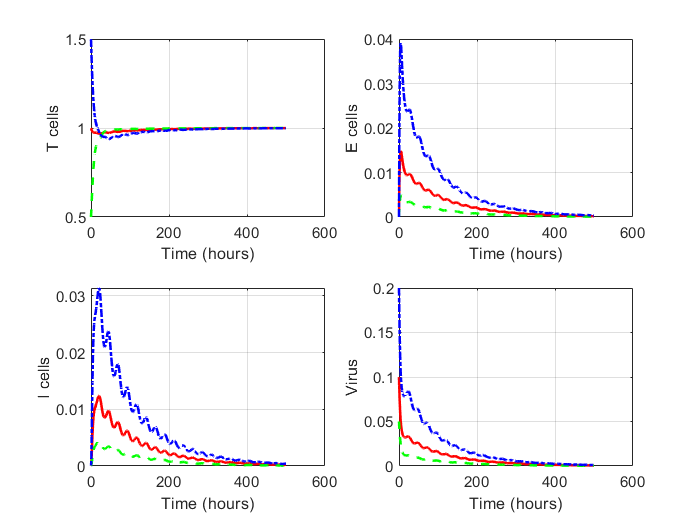}
    \caption{Time series of the system \eqref{1} under different initial conditions, with $\mathcal{R}_0 < 1$. The plots show the dynamics of T cells, E cells, I cells, and virus concentration over a 10-day period (240 hours). The constant parameters used in the model are: $C_1 = 0.1$, $C_2 = 0.1$, $k = 0.2$, $\delta = 0.09$, $p = 0.5$, and $c = 0.18$.}

    \label{fig1}
\end{figure}
\begin{figure}[H]
    \centering
    \includegraphics[width=\textwidth]{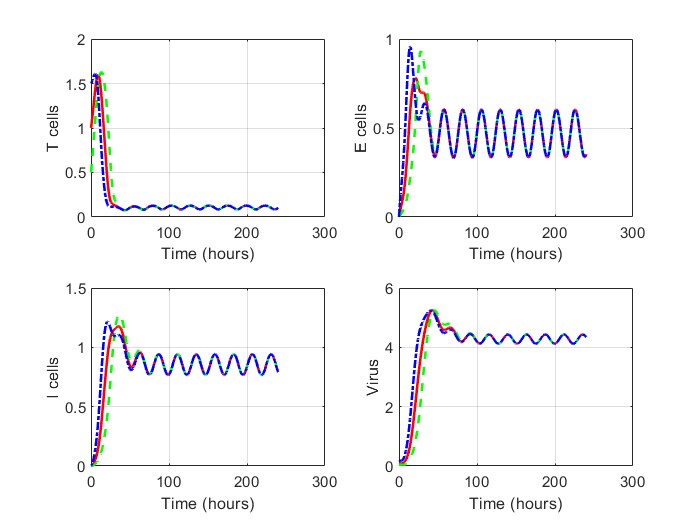}
    \caption{Time series of the system \eqref{1} under different initial conditions, with $\mathcal{R}_0 > 1$. The plots show the dynamics of T cells, E cells, I cells, and virus concentration over a 10-day period (240 hours). The constant parameters used in the model are: $C_1 = 0.1$, $C_2 = 0.1$, $k = 0.2$, $\delta = 0.1$, $p = 0.5$, and $c = 0.1$.}

    \label{fig2}
\end{figure}
In Figure \eqref{fig:phase_planes}, we provide an enlarged view of the limit cycle corresponding to the case where $\mathcal{R}_0 > 1$. This magnification allows for a clearer visualization of the system's dynamics when the infection persists. The figure illustrates the periodic oscillations that characterize the limit cycle behavior, emphasizing how the trajectories converge to a stable periodic orbit.

\begin{figure}[H]
    \centering
    \begin{minipage}[b]{0.3\textwidth}
        \centering
        \includegraphics[width=\textwidth]{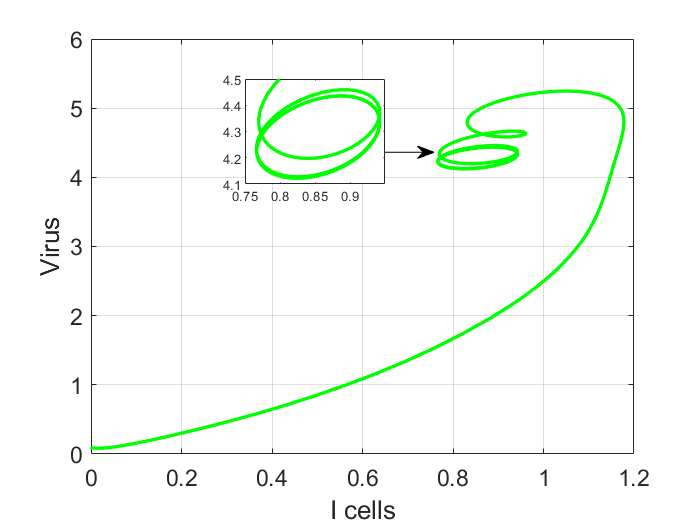}
        \label{fig:I_V}
    \end{minipage}
    \hfill
    \begin{minipage}[b]{0.3\textwidth}
        \centering
        \includegraphics[width=\textwidth]{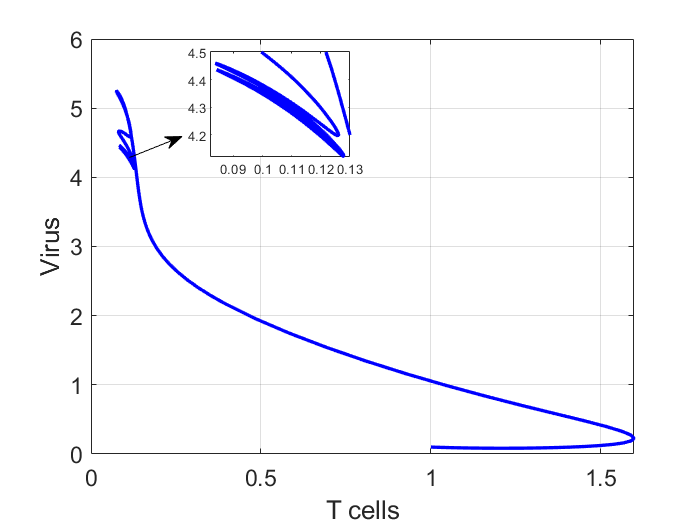}
        \label{fig:T_V}
    \end{minipage}
    \hfill
    \begin{minipage}[b]{0.3\textwidth}
        \centering
        \includegraphics[width=\textwidth]{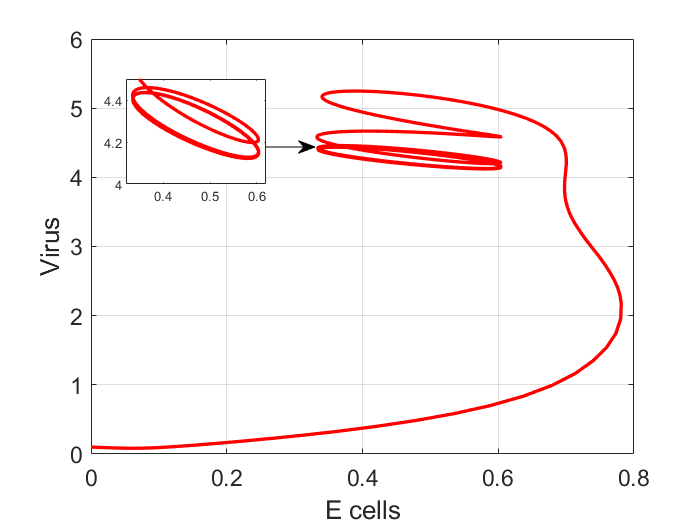}
        \label{fig:E_V}
    \end{minipage}
    \caption{Magnified view of the limit cycle of the dynamics \eqref{1}}\label{fig:phase_planes}
\end{figure}
\section{Discussion and conclusion}
In this study, we have developed a within-host infection model that incorporates periodic effects by introducing time-dependent coefficients to capture the dynamic nature of biological processes influenced by circadian rhythms. The core focus of our analysis has been to understand the threshold dynamics of the model, which are primarily governed by the basic reproduction number, $\mathcal{R}_0$. Our results demonstrate that the global behavior of the infection is decisively determined by this threshold parameter.

When $\mathcal{R}_0 < 1$, we have established that the global asymptotic stability of the virus-free periodic solution, $\mathcal{E}_0$, is guaranteed. This finding implies that under conditions where the basic reproduction number is less than unity, the infection will eventually be cleared from the host, regardless of the initial conditions. Conversely, when $\mathcal{R}_0 > 1$, the model reveals that the infection persists within the host, evolving towards a stable periodic state characterized by sustained viral presence.

A significant aspect of our model is the incorporation of the Crowley--Martin functional response to describe the infection rate. Unlike simpler mass-action, the Crowley--Martin response provides a more realistic representation of the host-pathogen interaction by accounting for the saturation effect of susceptible cells in the presence of an increasing viral load.

Our numerical simulations further support these theoretical findings. They indicate that, for $\mathcal{R}_0 > 1$, there exists a unique positive periodic solution that is globally asymptotically stable. This result implies that when the reproduction number exceeds unity, the infection dynamics do not merely oscillate randomly; instead, they settle into a predictable and recurring pattern over time. This periodic behavior highlights the importance of considering time-dependent factors in within-host models, as they can significantly affect the infection's persistence and overall progression.

In conclusion, our work provides new insights into the role of periodic factors in the dynamics of within-host infection models and emphasizes the utility of the Crowley--Martin functional response in capturing complex biological interactions. However, to further validate and refine our model, future work will focus on integrating real-world data to enhance its applicability and accuracy in describing actual infection scenarios. This data-driven approach will help us better understand the model's potential in clinical and epidemiological settings.
\section*{Acknowledgements}
This research was supported by project TKP2021-NVA-09, implemented with the support provided by the Ministry of Innovation and Technology of Hungary from the National Research, Development and Innovation Fund, financed under the TKP2021-NVA funding~scheme. I.N. was supported
by ÚNKP-23-3-New National Excellence Program of the Ministry for Innovation and Technology from the source of the National Research, Development and Innovation Fund and Stipendium Hungaricum scholarship with Application No.~403679. A.D. was supported by the National Laboratory for Health Security,
RRF-2.3.1-21-2022-00006 and by the project No.~129877,
implemented with the support provided from the National Research, Development and Innovation Fund of Hungary, financed under the KKP\_19 funding
scheme.
\section*{Conflicts of interest}
The authors declare that they have no conflict of interest related to this
research. The~authors have no financial interests in any companies or organizations that could benefit from the results of this study. This research was conducted independently and without any influence from any third~parties.

\end{document}